\numberwithin{equation}{section} 
\newtheorem{theorem}{Theorem}[section]
\newtheorem{lemma}[theorem]{Lemma}
\newtheorem{remark}{Remark}[section]
\theoremstyle{definition}
\newtheorem{definition}{Definition}[section]
\newtheorem{problem}{Problem}[section]
\algrenewcommand{\alglinenumber}[1]{\footnotesize Step #1:}
\newcommand{\ind}[1]{\mathrlap{\qquad #1}} 
\newcommand{\texteq}{\mathrm}
\newcommand{\card}[1]{\lvert #1 \rvert} 
\let\proglang=\textsf 
\def\N{{\mathbb N}}
\def\R{{\mathbb R}}
\def\A{\mathcal A}
\def\B{\mathcal B}
\def\D{\mathcal D}
\def\L{\mathcal L}
\def\U{\mathcal U}
\def\H{\mathcal H}
\def\x{\mathbf x}
\def\n{\mathbf n}
\def\zero{\mathbf 0}
\def\z{\mathbf z}
\def\Lt{\widetilde{\mathcal L}}
\def\Ut{\widetilde{\mathcal U}}
\def\Vt{\widetilde V}
\title{\bf ANOTHER SOLUTION FOR SOME OPTIMUM ALLOCATION PROBLEM}
\author{WOJCIECH WÓJCIAK\thanks{Warsaw University of Technology, Warsaw, Poland 
		\href{mailto:wojciech.wojciak.dokt@pw.edu.pl}{wojciech.wojciak.dokt@pw.edu.pl}}}
\begin{document}
	\maketitle

\begin{abstract}
	We derive optimality conditions for the optimum sample allocation problem in stratified sampling, formulated as the 
	determination of the fixed strata sample sizes that minimize the total cost of the survey, under the assumed level of 
	variance of the stratified $\pi$ estimator of the population total (or mean) and one-sided upper bounds imposed on sample 
	sizes in strata. In this context, we presume that the variance function is of some generic form that, in particular, covers the 
	case of the simple random sampling without replacement design in strata. The optimality conditions mentioned above will 
	be derived from the Karush-Kuhn-Tucker conditions. Based on the established optimality conditions, we provide a formal 
	proof of the optimality of the existing procedure, termed here as {\em LRNA}, which solves the allocation problem 
	considered. We formulate the {\em LRNA} in such a way that it also provides the solution to the classical optimum allocation 
	problem (i.e. minimization of the estimator's variance under a fixed total cost) under one-sided lower bounds imposed on 
	sample sizes in strata. In this context, the {\em LRNA} can be considered as a counterparty to the popular recursive 
	Neyman allocation procedure that is used to solve the classical problem of an optimum sample allocation with added 
	one-sided upper bounds. Ready-to-use \proglang{R}-implementation of the {\em LRNA} is available through our {\tt 
	stratallo} package, which is published on the Comprehensive R Archive Network (CRAN) package repository.
	
	\smallskip \noindent \textbf{Key words:} stratified sampling, optimum allocation, minimum cost allocation under upper 
	bounds, optimum allocation constant variance, optimum allocation under lower bounds, recursive Neyman algorithm.
\end{abstract}
	
\section{Introduction}
\label{sec:intro}

Let us consider a finite population $U$ consisting of $N$ elements. Let the parameter of principal interest of a single study 
variable $y$ in $U$ be denoted by $\theta$. This parameter is the population total (i.e. $\theta = \sum_{k \in U}\, y_k$, 
where $y_k$ denotes the value of $y$ for population element $k \in U$), or the population mean (i.e. $\theta = \tfrac{1}{N} 
\sum_{k \in U}\, y_k$). To estimate $\theta$, we consider the {\em stratified $\pi$ estimator}, i.e. the {\em $\pi$ estimator} of 
Horvitz and Thompson \citep[see, e.g.][Section 2.8, p.~42]{Sarndal} in {\em stratified sampling}. Under this well-known 
sampling technique, population $U$ is stratified, i.e. $U = \bigcup_{h \in \H}\, U_h$, where $U_h,\, h \in \H$, called strata, are 
pairwise disjoint and non-empty, and $\H = \{1, \ldots, H\}$ denotes a finite set of strata indices of size $H \geq 1$. The size of 
stratum $U_h$ is denoted $N_h,\, h \in \H$ and clearly $\sum_{h \in \H} N_h = N$. Probability samples of size $n_h \leq 
N_h,\, h \in \H$ are selected independently from each stratum according to chosen sampling designs, which are often the 
same in all strata. The resulting total sample is of size $n = \sum_{h \in \H}\, n_h \leq N$. It is well know that the {\em stratified 
$\pi$ estimator} $\hat{\theta}$ of $\theta$ and its variance $V_{\hat{\theta}}$ are expressed in terms of the first and second 
order inclusion probabilities (see, e.g. \citet[Result 3.7.1, p.~102]{Sarndal} for the case when $\theta$ is the population total). 
In particular, for several important sampling designs
\begin{equation}
	\label{eq:var}
	V_{\hat{\theta}}(\n) = \sum_{h \in \H}\, \tfrac{A_h^2}{n_h} - A_0,
\end{equation}
where $\n = (n_h,\, h \in \H)$ and $A_0,\, A_h > 0,\, h \in \H$ do not depend on $\n$. Among the most basic and common 
examples that give rise to the variance of the form \eqref{eq:var} is the {\em stratified $\pi$ estimator} of the population total 
with {\em simple random sampling without replacement} design in strata. This case yields in \eqref{eq:var}: $A_h = N_h S_h,\, 
h \in \H$, and $A_0 = \sum_{h \in \H}\, N_h S_h^2$, where $S_h$ denotes stratum standard deviation of study variable $y$ 
\citep[see, e.g.][equation 3.7.8, p.~103]{Sarndal}.

The values of the strata sample sizes $n_h,\, h \in \H$, are chosen by the sampler. They may be selected to minimize the 
variance \eqref{eq:var} at the admissible level of the total cost of the survey or to minimize the total cost of the survey subject 
to a fixed precision \eqref{eq:var}. The simplest total cost function is of the form:
\begin{equation}
	\label{eq:cost}
	c(\n) = c_0 + \sum_{h \in \H} c_h n_h,
\end{equation}
where $c_0$ is a fixed overhead cost and $c_h > 0$ is the cost of surveying one element in stratum $U_h,\, h \in \H$. For 
further references, see, e.g. \citet[Section 3.7.3, p.~104]{Sarndal} or \citet[Section 5.5, p.~96]{Cochran}. In this paper, we are 
interested in the latter strategy, i.e. the determination of the sample allocation $\n$ that minimizes total cost \eqref{eq:cost} 
under assumed fixed level of the variance \eqref{eq:var}. We also impose one-sided upper bounds on sample sizes in strata. 
Such optimization problem can be conveniently written in the language of mathematical optimization as Problem 
\ref{prob:min_cost}, in the definition of which we intentionally omit fixed overhead cost $c_0$ as it has no impact on the 
optimal solution to this problem.

\begin{problem}
	\label{prob:min_cost}
	Given a finite set $\H \neq \emptyset$ and numbers $A_0,\, A_h > 0,\, c_h > 0,\, M_h > 0$, such that $M_h \leq N_h,\, h \in 
	\H$, and $V \geq \sum_{h \in \H} \tfrac{A_h^2}{M_h} - A_0 \geq 0$,
	\begin{align}
		\underset{\x\, =\, (x_h,\, h \in \H)\, \in\, \R_+^{\card{\H}}}{\texteq{minimize ~\,}}  & \quad \sum_{h \in \H} c_h x_h 
		\label{eq:prob:min_cost:obj} \\
		\texteq{subject ~ to \quad\,\,\,}	& \quad \sum_{h \in \H} \tfrac{A_h^2}{x_h} - A_0 = V \label{eq:prob:min_cost:var} \\
		& \quad x_h \le M_h, \ind{h \in \H.} \label{eq:prob:min_cost:M}
	\end{align}
\end{problem}

To emphasize the fact that the optimal solution to Problem \ref{prob:min_cost} may not be an integer one, we denote the 
optimization variable by $\x$, not by $\n$. Non-integer solution can be rounded up in practice with the resulting variance 
\eqref{eq:var} being possibly near $V$, instead of the exact $V$. The upper bounds $M_h$ imposed on $x_h,\, h \in \H$, are 
natural since for instance the allocation with $x_h > N_h$ for some $h \in \H$ is impossible. We assume that $V \geq 
\sum_{h \in \H} \tfrac{A_h^2}{M_h} - A_0$, since otherwise, if $V < \sum_{h \in \H} \tfrac{A_h^2}{M_h} - A_0$, the problem is 
infeasible. We also note that in the case when $V = \sum_{h \in \H}\, \tfrac{A_h^2}{M_h} - A_0$, the solution is trivial, i.e.: 
$\x^* = (M_h,\, h \in \H)$. 

It is worth noting that in the definition of Problem \ref{prob:min_cost}, we require (through \eqref{eq:prob:min_cost:var}) that 
the variance defined in \eqref{eq:var} is equal to a certain fixed value, denoted as $V$, and not less than that, whilst, it might 
seem more favourable at first, to require the variance \eqref{eq:var} to be less than or equal to  $V$, especially given the 
practical context in which Problem \ref{prob:min_cost} arises (see Section \ref{sec:motivation} below). It is easy to see, 
however, that the objective function \eqref{eq:prob:min_cost:obj} and the variance constraint \eqref{eq:prob:min_cost:var} 
are of such a form that the minimum of \eqref{eq:prob:min_cost:obj} is achieved for a value that yields the allowable 
maximum of function \eqref{eq:var}, which is $V$. Thus, regardless of whether the variance constraint is an equality 
constraint or an inequality constraint, the optimal solution will be the same in both of these cases.

\bigskip
Our approach to the optimum allocation Problem \ref{prob:min_cost} will be twofold. First, in Section \ref{sec:optcon}, we 
make use of the Karush–Kuhn–Tucker conditions (see Appendix \ref{app:kkt}) to establish necessary and sufficient 
conditions, the so-called optimality conditions, for a solution to slightly reformulated optimization Problem 
\ref{prob:min_cost}, defined as a separate Problem \ref{prob:lower}. This task is one of the main objectives of this paper. 
Optimality conditions, which are often given as closed-form expressions, are fundamental to the analysis and development of 
effective algorithms for an optimization problem. Namely, algorithms recognize solutions by checking whether they satisfy 
various optimality conditions and terminate when such conditions hold. This elegant strategy has evident advantages over 
some alternative ad-hoc approaches, commonly used in survey sampling, which are usually tailored for a specific allocation 
algorithm being proposed. Next, in Section \ref{sec:lrna}, we precisely define the {\em LRNA} algorithm which solves Problem 
\ref{prob:lower} (and in consequence Problem \ref{prob:min_cost}) and based on the established optimality conditions we 
provide the formal proof of its optimality, which is the second main objective of this paper. 
To complement our work on this subject, we provide user-end function in \proglang{R} \citep[see][]{R} that implements the 
{\em LRNA}. This function is included in our package {\tt stratallo} \citep{stratallo}, which is published on the Comprehensive 
R Archive Network (CRAN) package repository.

\section{Motivation}
\label{sec:motivation}

Optimum sample allocation Problem \ref{prob:min_cost} is not only a theoretical problem, but it is also an issue of substantial 
practical importance. Usually, an increase in the number of samples entails greater costs of the data collection process. Thus, 
it is often demanded that total cost \eqref{eq:cost} be somehow minimized. On the other hand, the minimization of the cost 
should not cause significant reduction of the quality of the estimation, which can be measured by the variance \eqref{eq:var}. 
Hence, Problem  \ref{prob:min_cost} arises very naturally in the planning of sample surveys, when it is necessary to obtain an 
estimator $\hat{\theta}$ with some predetermined precision $V$ that ensures the required level of estimation quality, while 
keeping the overall cost as small as possible. Problem \ref{prob:min_cost} appears also in the context of optimum 
stratification and sample allocation between subpopulations in \citet{SkibickiWywiał} or \citet{Lednicki2}. The authors of the 
latter paper incorporate variance equality constraint into the objective function and then use numerical algorithms (for 
minimization of a non-linear multivariate function) to find the minimum of the objective function. If the solution found violates 
any of the inequality constraints, then the objective function is properly adjusted and the algorithm is re-run again. See also a 
related paper by \citet{WNB}, where the allocation under the constraint of the equal precision for estimation of the strata 
means was considered.

\bigskip
The problem of minimization of the total cost under constraint on stratified estimator's variance is well known in the domain 
literature. It was probably first formulated by Tore Dalenius in \citet{Dalenius1949, Dalenius1953} and later in his Ph.D. thesis 
\citet[Chapter 1.9, p.~19; Chapters 9.4 - 9.5, p.~199]{Dalenius}. Dalenius formed this allocation problem in the context of 
multicharacter (i.e. in the presence of several variables under study) stratified sampling (without replacement) and without 
taking into account upper-bounds constraints \eqref{eq:prob:min_cost:M}. He solved his problem with the use of simple 
geometric methods for the case of two strata and two estimated population means, indicating that the technique that was 
used is applicable also for the case with any number of strata and any number of variables. Among other resources that are 
worth mentioning are \citet{Yates} and \citet{Chatterjee}.

\cite{KokanKhan} considered a multicharacter generalization of Problem \ref{prob:min_cost} and proposed a procedure that 
leads to the solution of this problem. The proof of the optimality of the obtained solution given by the authors is not strictly 
formal and, similarly to Dalenius' work, is based solely on geometrical methods. The {\em LRNA} algorithm presented in 
Section \ref{sec:lrna} below, can be viewed as a special case of that Kokan-Khan's procedure for a single study variable. It is 
this method that is generally accepted as the one that solves Problem \ref{prob:min_cost} and is described in popular survey 
sampling textbooks such as, e.g. \citet[Remark 12.7.1, p.~466]{Sarndal} or \citet[Section 5.8, p.~104]{Cochran}. For earlier 
references, see \citet{Hartley} or \citet{Kokan1963}, who discussed how to use non-linear programming technique to 
determine allocations in multicharacter generalization of Problem \ref{prob:min_cost}. More recent references can be made to 
\citet{Bethel}, who proposed a closed form expression (in terms of Lagrange multipliers) for a solution to relaxed Problem 
\ref{prob:min_cost} without \eqref{eq:prob:min_cost:M}, as well as to \citet{HughesRao}, who obtained the solution to 
Problem \ref{prob:min_cost} by employing an extension of a result due to \citet{Thompson}. Eventually, for integer solution to 
Problem \ref{prob:min_cost}, we refer to \citet{KhanAhsan}.

\bigskip
We would like to note that the form of the transformation \eqref{eq:optvar_change} that we have chosen to convert Problem 
\ref{prob:min_cost} into a convex optimization Problem \ref{prob:lower} was not the only possible choice. An alternative 
transformation is for instance $z_h = \tfrac{1}{x_h},\, h \in \H$, which was used by \citet{KokanKhan} or \citet{HughesRao} in 
their approaches to (somewhat generalized) Problem \ref{prob:min_cost}. Nevertheless, as it turns out, transformation 
\eqref{eq:optvar_change} causes that induced Problem \ref{prob:lower} gains some interesting interpretation from the point 
of view of practical application. That is, if one treats $z_h$ as stratum sample size $x_h,\, h \in \H$, then Problem 
\ref{prob:lower} with $\Vt = n$ and $c_h = 1, h \in \H$, becomes a classical optimum allocation Problem \ref{prob:optalloc} 
with added one-sided lower-bounds constraints $z_h \geq m_h > 0,\, h \in \H$. Such allocation problem can be viewed as 
twinned to Problem \ref{prob:upper} and is itself interesting for practitioners. The lower bounds are necessary, e.g. for 
estimation of population strata variances $S_h^2,\, h \in \H$, which in practice are rarely known a priori. If they are to be 
estimated from the sample, it is required that at least $n_h \geq 2,\, h \in \H$. They also appear when one treats strata as 
domains and assigns upper bounds for variances of estimators of totals in domains. Such approach was considered, e.g. in 
\citet{Choudhry}, where the additional constraints $(\tfrac{1}{n_h} - \tfrac{1}{N_h}) N_h^2 S_h^2 \le R_h,\, h \in \H$, where 
$R_h,\, h \in \H$ are given constants, have been imposed. Obviously, this system of inequalities can be rewritten as 
lower-bounds constraints on $n_h$, i.e. $n_h \geq m_h = \tfrac{N_h^2 S_h^2}{R_h + N_h S_h^2},\, h \in \H$. The solution 
given in \citet{Choudhry} was obtained by the procedure based on the Newton-Raphson algorithm, a general-purpose 
root-finding numerical method. See also a related paper by \citet{WNB}, where the allocation under the constraint of the 
equal precision for estimation of the strata means was considered. The affinity between allocation Problem \ref{prob:lower} 
and Problem \ref{prob:upper} translates to significant similarities between the {\em LRNA} that solves Problem 
\ref{prob:lower} and the popular recursive Neyman allocation procedure, {\em RNA}, that solves Problem \ref{prob:upper} 
(see Appendix \ref{app:rna}). To emphasize these similarities, the name {\em LRNA} was chosen for the former.

In summary, the {\em LRNA}, formulated as in this work, solves two different but related problems of optimum sample 
allocation that are of a significant practical importance, i.e. Problem \ref{prob:min_cost} and Problem \ref{prob:lower}.

\section{Optimality conditions}
\label{sec:optcon}

In this section, we establish a general form of the solution to (somewhat reformulated) Problem \ref{prob:min_cost}, the 
so-called optimality conditions. For this problem, the optimality conditions can be derived reliably from the 
Karush–Kuhn–Tucker (KKT) conditions, first derivative tests for a solution in nonlinear programming to be optimal (see 
Appendix \ref{app:kkt} and the references given therein for more details). It is well known that for convex optimization 
problem (with some minor regularity conditions) the KKT conditions are not only necessary but also sufficient. Problem 
\ref{prob:min_cost} is however not a convex optimization problem because the equality constraint function $\sum_{h \in \H} 
\tfrac{A_h^2}{x_h} - A_0 - V$ of $\x = (x_h,\,h \in \H)$ is not affine and hence, the feasible set might not be convex. 
Nevertheless, it turns out that Problem \ref{prob:min_cost} can be easily reformulated to a convex optimization Problem 
\ref{prob:lower}, by a simple change of its optimization variable from $\x$ to $\z = (z_h,\, \in \H)$ with elements of the form:
\begin{equation}
	\label{eq:optvar_change}
	z_h := \tfrac{A_h^2}{c_h x_h}, \ind{h \in \H.}
\end{equation}

\begin{problem}
	\label{prob:lower}
	Given a finite set $\H \neq \emptyset$ and numbers $A_h > 0,\, c_h > 0,\, m_h > 0,\, h \in \H$, $\Vt \geq \sum_{h \in \H} c_h  
	m_h$,
	\begin{align}
		\underset{\z\, =\, (z_h,\, h \in \H)\, \in\, \R_+^{\card{\H}}}{\texteq{minimize ~\,}}  & \quad \sum_{h \in \H} \tfrac{A_h^2}{z_h} 
		\label{eq:prob:obj} \\
		\texteq{subject ~ to \quad\,\,\,}	& \quad \sum_{h \in \H} c_h z_h = \Vt \label{eq:prob:var} \\
		& \quad z_h \geq m_h, \ind{h \in \H.} \label{eq:prob:lower:m}
	\end{align}
\end{problem}

For Problem \ref{prob:lower} to be equivalent to Problem \ref{prob:min_cost} under transformation \eqref{eq:optvar_change}, 
parameters $m_h,\, h \in \H$, and $\Vt$ must be such that
\begin{equation}
	\label{eq:optvar_change_params}
	\begin{aligned}
		m_h &:= \tfrac{A_h^2}{c_h M_h}, \ind{h \in \H,} \\
		\Vt &:= V + A_0 \geq A_0,
	\end{aligned}
\end{equation}
where numbers $V,\, A_0,\, M_h,\, h \in \H$ are as in Problem \ref{prob:min_cost}. Nonetheless, as we explained at the end of 
Section \ref{sec:motivation} of this paper, Problem \ref{prob:lower} can be considered as a separate allocation problem, 
unrelated to Problem \ref{prob:min_cost}; that is Problem \ref{prob:optalloc} with added one-sided lower-bounds constraints. 
For this reason, the only requirements imposed on these parameters are those given in the definition of Problem 
\ref{prob:lower}.

\bigskip
The auxiliary optimization Problem \ref{prob:lower} is indeed a convex optimization problem as it is justified by Remark 
\ref{rem:prob_convex}.

\begin{remark}
	\label{rem:prob_convex}
	Problem \ref{prob:lower} is a convex optimization problem as its objective function $f: \R_+^{\card{\H}} \to \R_+$, 
	\begin{equation}
		f(\z)  = \sum_{h \in \H} \tfrac{A_h^2}{z_h}, \label{eq:rem_prob_convex:f}
	\end{equation}
	and inequality constraint functions $g_h: \R_+^{\card{\H}} \to \R$, 
	\begin{align}
		&g_h(\z) = m_h - z_h,  \ind{h \in \H,} \label{eq:rem_prob_convex:g}
	\end{align}	
	are convex functions, while the equality constraint function $w: \R_+^{\card{\H}} \to \R$, 
	\begin{equation*}
		w(\z) = \sum_{h \in \H} c_h z_h - \Vt
	\end{equation*}	
	is affine. More specifically, Problem \ref{prob:lower} is a convex optimization problem of a particular type in which 
	inequality constraint functions \eqref{eq:rem_prob_convex:g} are affine. See Appendix \ref{app:kkt} for the definition of the
	convex optimization problem.
\end{remark}

As we shall see in Theorem \ref{th:optcond}, the optimization Problem \ref{prob:lower} has a unique optimal solution. 
Consequently, due to transformation \eqref{eq:optvar_change} and given \eqref{eq:optvar_change_params}, vector
\begin{equation}
	\label{eq:redef_minsample_lower_sol}
	\x^* = \left(\tfrac{A_h^2}{c_h z^*_h},\, h \in \H \right)
\end{equation}	
is a unique optimal solution of Problem \ref{prob:min_cost}, where $\z^* = (z^*_h,\, h \in \H)$ is a solution to Problem 
\ref{prob:lower}. For this reason, for the remaining part of this work, our focus will be on a solution to Problem 
\ref{prob:lower}. We also note here that the solution to Problem \ref{prob:lower} is trivial in the case of $ \Vt = \sum_{h \in \H} 
c_h m_h$, i.e.: $\z^* = (m_h,\, h \in \H)$.

\bigskip
Before we establish necessary and sufficient optimality conditions for a solution to convex optimization Problem 
\ref{prob:lower}, we first define a set function $s$, which considerably simplifies notation and many calculations that are 
carried out in this and subsequent section.

\begin{definition}
	\label{def:s}
	Let $\H,\, A_h,\, c_h,\, m_h,\, h \in \H$, and $\Vt$ be as in Problem \ref{prob:lower}. Set function $s$ is defined as:
	\begin{equation}
		\label{eq:s}
		s(\L) = \frac{\Vt - \sum_{h \in \L} c_h m_h}{\sum_{h \in \H \setminus \L} A_h \sqrt{c_h}}, \ind{\L \subsetneq \H.}
	\end{equation}
\end{definition}

Below, we will introduce the notation of vector $\z^\L = (z_h^\L,\, h \in \H)$. It turns out that the solution to Problem 
\ref{prob:lower} is necessarily of the form \eqref{eq:Valloc} with the set $\L \subseteq \H$ defined implicitly through the 
inequality of a certain form given in Theorem \ref{th:optcond}.

\begin{definition}
	\label{def:Valloc}
	Let $\H,\, A_h,\, c_h,\, m_h,\, h \in \H,\, \Vt$ be as in Problem \ref{prob:lower} and let $\L \subseteq \H$. Vector  $\z^\L = 
	(z_h^\L,\, h \in \H)$ is defined as follows
	\begin{equation}
		\label{eq:Valloc}
		z_h^\L = 
		\begin{cases}
			m_h,										&\ind{h \in \L} \\
			\tfrac{A_h}{\sqrt{c_h}}\, s(\L)	&\ind{h \in \H \setminus \L.}
		\end{cases}
	\end{equation}	
\end{definition}

The following Theorem \ref{th:optcond} characterizes the form of the optimal solution to Problem \ref{prob:lower} and 
therefore is the key theorem of this paper.

\begin{theorem}[Optimality conditions]
	\label{th:optcond}
	The optimization Problem \ref{prob:lower} has a unique optimal solution. Point $\z^* = (z_h^*,\, h \in \H) \in 
	\R_+^{\card{\H}}$ is a solution to optimization Problem \ref{prob:lower} if and only if $\z^*= \z^{\L^*}$ with $\L^* \subseteq 
	\H$, such that one of the following two cases holds:
	\begin{enumerate}[wide, labelindent=0pt, leftmargin=*]
		\item[CASE I:] $\L^* \subsetneq \H$ and
		\begin{equation}
			\L^* = \left\{h \in \H:\, s(\L^*) \leq \tfrac{\sqrt{c_h} m_h}{A_h} \right\}, \label{eq:optcond_1} \\
		\end{equation}
		where set function $s$ is defined in \eqref{eq:s}.
		\item[CASE II:] $\L^* = \H$ and
		\begin{equation}
			\Vt = \sum_{h \in \H} c_h m_h. \label{eq:optcond_2}
		\end{equation}
	\end{enumerate}
\end{theorem}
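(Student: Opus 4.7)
The plan is to invoke the Karush--Kuhn--Tucker machinery of Appendix \ref{app:kkt}: by Remark \ref{rem:prob_convex}, Problem \ref{prob:lower} is convex with affine equality and inequality constraints, so a linearity-type constraint qualification holds automatically and the KKT conditions become \emph{both necessary and sufficient} for optimality. Existence and uniqueness of a minimizer are a preliminary observation: the feasible set is non-empty (by the standing hypothesis $\Vt \geq \sum_{h \in \H} c_h m_h$), convex, and bounded---from $z_h \geq m_h$ combined with $\sum_{h \in \H} c_h z_h = \Vt$ one obtains $c_h z_h \leq \Vt - \sum_{h' \neq h} c_{h'} m_{h'}$---hence compact, and the continuous, strictly convex objective $\sum_{h \in \H} A_h^2/z_h$ attains a unique minimum on it.

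For the necessity direction, I would form the Lagrangian
\[
L(\z,\lambda,\mu) = \sum_{h \in \H} \tfrac{A_h^2}{z_h} + \lambda\Bigl(\sum_{h \in \H} c_h z_h - \Vt\Bigr) + \sum_{h \in \H} \mu_h (m_h - z_h),
\]
so that KKT yields stationarity $A_h^2/(z_h^*)^2 = \lambda c_h - \mu_h$, dual feasibility $\mu_h \geq 0$, and complementary slackness $\mu_h(m_h - z_h^*) = 0$. Setting $\L^* := \{h \in \H : z_h^* = m_h\}$, for $h \notin \L^*$ slackness forces $\mu_h = 0$, hence $z_h^* = A_h / \sqrt{\lambda c_h}$; substituting back into \eqref{eq:prob:var} identifies $1/\sqrt{\lambda} = s(\L^*)$ (assuming $\L^* \subsetneq \H$), so $z_h^* = A_h\, s(\L^*)/\sqrt{c_h}$, exactly as in Definition \ref{def:Valloc}. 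For $h \in \L^*$, the inequality $\mu_h = \lambda c_h - A_h^2/m_h^2 \geq 0$ rewrites as $s(\L^*) \leq m_h \sqrt{c_h}/A_h$, while for $h \notin \L^*$ the strict inequality $z_h^* > m_h$ becomes $s(\L^*) > m_h \sqrt{c_h}/A_h$; together these yield the set equation \eqref{eq:optcond_1}. Case II corresponds to $\L^* = \H$, where the equality constraint collapses to \eqref{eq:optcond_2} and $s$ plays no role.

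For sufficiency, I would take any $\L^*$ satisfying the hypothesis of Case I or Case II, define $\lambda := 1/s(\L^*)^2$ together with $\mu_h := \lambda c_h - A_h^2/m_h^2$ for $h \in \L^*$ (and $\mu_h := 0$ otherwise, with an analogous choice in Case II), then verify that $(\z^{\L^*},\lambda,\mu)$ satisfies all KKT conditions; sufficiency of KKT for a convex problem then certifies $\z^{\L^*}$ as the unique minimizer.

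The trickiest aspect is that \eqref{eq:optcond_1} defines $\L^*$ \emph{implicitly}, since it appears on both sides of the equation through $s(\L^*)$. Rather than treating this as a fixed-point equation to be solved abstractly, my strategy is to extract $\L^*$ from the unique optimum itself (via $\L^* := \{h : z_h^* = m_h\}$) in the necessity direction, and to verify KKT for the explicit candidate $\z^{\L^*}$ in the sufficiency direction; uniqueness of $\L^*$ is then inherited from uniqueness of $\z^*$. A minor side-check needed along the way is that $s(\L^*) > 0$ whenever $\L^* \subsetneq \H$, which follows from $\sum_{h \notin \L^*} c_h z_h^* = \Vt - \sum_{h \in \L^*} c_h m_h$ together with $z_h^* > m_h > 0$ for $h \notin \L^*$, ensuring that the square root taken when inverting $\lambda$ is legitimate.
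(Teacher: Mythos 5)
Your proposal is correct and rests on the same foundation as the paper's proof: existence and uniqueness via compactness of the feasible set plus strict convexity of the objective, followed by the KKT conditions of Theorem \ref{th:kkt}, which are both necessary and sufficient here because, per Remark \ref{rem:prob_convex}, all constraints are affine. Where you genuinely diverge is in the treatment of the ``only if'' direction. The paper's proof only runs the sufficiency check: it exhibits multipliers $\lambda = 1/s^2(\L^*)$ and $\mu_h$ making $\z^{\L^*}$ satisfy \eqref{eq:kkt_prob_stat}--\eqref{eq:kkt_prob_compl} whenever $\L^*$ satisfies CASE I or CASE II, and then leans on uniqueness of the minimizer to conclude; strictly speaking this settles necessity only once one knows that some $\L^*$ satisfying \eqref{eq:optcond_1} or \eqref{eq:optcond_2} actually exists, a fact the paper supplies only later and constructively via the {\em LRNA} (Theorem \ref{th:lrna}). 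You instead derive necessity directly: starting from the KKT conditions at the (unique) optimum, you set $\L^* := \{h : z_h^* = m_h\}$, use complementary slackness to get $z_h^* = A_h/\sqrt{\lambda c_h}$ off $\L^*$, recover $1/\sqrt{\lambda} = s(\L^*)$ from \eqref{eq:prob:var}, and translate dual feasibility on $\L^*$ and strict slackness off $\L^*$ into the two inclusions composing \eqref{eq:optcond_1}. This is slightly longer but self-contained, yields the form of Definition \ref{def:Valloc} rather than postulating it, and closes the existence-of-$\L^*$ gap without appealing to the algorithm; your side-checks ($s(\L^*)>0$, boundedness of the feasible set) are the right ones and are argued correctly.
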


\begin{proof}
	We first prove that the solution to Problem \ref{prob:lower} exists and it is unique. In the optimization Problem 
	\ref{prob:lower}, a feasible set $F := \{\z \in \R_+^{\card{\H}}: \eqref{eq:prob:var} \text{ and } \eqref{eq:prob:lower:m} 
	\text{ are satisfied} \}$ is non-empty as guaranteed by the requirement $\Vt \geq \sum_{h \in \H} c_h m_h$. The objective 
	function in \eqref{eq:prob:obj} attains its minimum on $F$ since it is a continuous function on $F$ and $F$ is closed and 
	bounded. Finally, the uniqueness of the solution is due to strict convexity of the objective function on the set $F$.
	
	\bigskip
	As mentioned at the beginning of Section \ref{sec:optcon}, the form of the solution to Problem \ref{prob:lower}, can be 
	derived from the KKT conditions (see Appendix \ref{app:kkt}). Following the notation of Remark \ref{rem:prob_convex}, 
	gradients of the objective function $f$ and constraint functions $w,\, g_h,\, h \in \H$, are as follows:
	\begin{equation*}
		\nabla f(\z) = \left(- \tfrac{A_h^2}{z_h^2},\, h \in \H \right), \quad
		\nabla w(\z) = (c_h,\, h \in \H), \quad 
		\nabla g_h(\z)  = -\underline{1}_h, \quad \z \in \R_+^{\card{\H}},
	\end{equation*}\\\\
	where $\underline{1}_h$ is a vector with all entries $0$ except the entry at index $h$, which is $1$. Consequently, 
	the KKT conditions \eqref{eq:kkt} assume the following form for the optimization Problem \ref{prob:lower}:
	\begin{align}
		-\tfrac{A_h^2}{z^{*2}_h} + \lambda c_h - \mu_h &= 0, \ind{h \in \H,} \label{eq:kkt_prob_stat} \\
		\sum_{h \in \H} c_h z_h^* - \Vt &= 0, \label{eq:kkt_prob_V} \\
		m_h - z^*_h &\leq 0, \ind{h \in \H,}  \label{eq:kkt_prob_ineq} \\
		\mu_h(m_h - z^*_h) &= 0,  \ind{h \in \H.}  \label{eq:kkt_prob_compl} 
	\end{align}
	Following Theorem \ref{th:kkt} and Remark \ref{rem:prob_convex}, in order to prove Theorem \ref{th:optcond}, it suffices 
	to show that there exist $\lambda \in \R$ and $\mu_h \ge 0,\, h \in \H$, such that \eqref{eq:kkt_prob_stat} - 
	\eqref{eq:kkt_prob_compl} are met for $\z^* = \z^{\L^*}$ with $\L^* \subseteq \H$ satisfying conditions of CASE I or CASE 
	II.
	
	\begin{enumerate}[wide, labelindent=0pt, leftmargin=*]
		\item[CASE I:] Following \eqref{eq:Valloc} and \eqref{eq:s}, we get
		\begin{equation*}
			\sum_{h \in \H}\, {c_h z_h^*} = \sum_{h \in \L^*} c_h m_h + \sum_{h \in \H \setminus \L^*}\, c_h 
			\tfrac{A_h}{\sqrt{c_h}}\, 
			s(\L^*) = \Vt,
		\end{equation*}
		and hence, the condition \eqref{eq:kkt_prob_V} is always satisfied.
		Let $\lambda = \tfrac{1}{s^2(\L^*)}$, where $s(\L^*) > 0$ is defined in \eqref{eq:s}, and
		\begin{equation}
			\mu_h = 
			\begin{cases}
				\lambda c_h - \tfrac{A_h^2}{m^2_h},	&\ind{h \in \L^*} \\
				0,	&\ind{h \in \H \setminus \L^*.}
			\end{cases}
		\end{equation}	
		Note that $\mu_h \geq 0,\, h \in \L^*$, due to \eqref{eq:optcond_1}. Then, the condition \eqref{eq:kkt_prob_stat} is 
		clearly satisfied. Inequalities \eqref{eq:kkt_prob_ineq} and equalities \eqref{eq:kkt_prob_compl} are trivial for $h \in 
		\L^*$ since ${z_h^*} = m_h$. For $h \in \H \setminus \L^*$, inequalities \eqref{eq:kkt_prob_ineq} follow from 
		\eqref{eq:optcond_1}, i.e. $\tfrac{A_h}{\sqrt{c_h}}\, s(\L^*) > m_h$, whilst \eqref{eq:kkt_prob_compl} hold true due to 
		$\mu_h = 0$.
		
		\item[CASE II:] Take arbitrary $\lambda \geq \max_{h \in \H}\, \tfrac{A_h^2}{m_h^2 c_h}$ and $\mu_h = \lambda c_h - 
		\tfrac{A_h^2}{m^2_h},\, h \in \H$. Note that $\mu_h \geq 0,\, h \in \H$. Then, \eqref{eq:kkt_prob_stat} - 
		\eqref{eq:kkt_prob_compl} are clearly satisfied for $(z^*_h,\, h \in \H) = (m_h,\, h \in \H)$, whilst \eqref{eq:kkt_prob_V} 
		follows after referring to \eqref{eq:optcond_2}.
	\end{enumerate}
\end{proof}

Theorem \ref{th:optcond} gives the general form of the optimum solution up to specification of the set $\L^* \subseteq \H$ 
that corresponds to the optimal solution $\z^* = \z^{\L^*}$. The issue of how to identify this set is the subject of the next 
section of this paper.

\section{Recursive Neyman algorithm under lower-bounds constraints}
\label{sec:lrna}

In this section, we formalize the definition of the existing algorithm, termed here {\em LRNA}, solving Problem \ref{prob:lower} 
and provide a formal proof of its optimality. The proof given is based on the optimality conditions formulated in Theorem 
\ref{th:optcond}.

\begin{algorithm}[H]
	\caption{{\em LRNA}}
	\textbf{Input:} $\H,\, (A_h)_{h \in \H},\, (c_h)_{h \in \H},\, (m_h)_{h \in \H},\, \Vt$.
	\begin{algorithmic}[1]
		\Require $A_h > 0,\, c_h > 0,\, m_h > 0,\, h \in \H$, $\Vt \geq \sum_{h \in \H}\, c_h m_h$.
		\State Let $\L = \emptyset$.
		\State\label{alg:lrna:step_R}Determine $\Lt = \left\{h \in \H \setminus \L:\, \tfrac{A_h}{\sqrt{c_h}} s(\L) \leq m_h \right\}$,
		where function $s$ is defined in \eqref{eq:s}.
		\State\label{alg:lrna:step_check} If {$\Lt = \emptyset$}, go to \footnotesize Step \ref{alg:lrna:step_return} \normalsize. 
		Otherwise, update $\L \gets \L \cup \Lt$  and go to \footnotesize Step \ref{alg:lrna:step_R} \normalsize.
		\State\label{alg:lrna:step_return}Return $\z^* = (z^*_h,\, h \in \H)$ with
		$z^*_h =
		\begin{cases}
			m_h,											& h \in \L \\
			\tfrac{A_h}{\sqrt{c_h}}\, s(\L),	& h \in \H \setminus \L.
		\end{cases}
		$
	\end{algorithmic}
\end{algorithm}

\begin{theorem}
	\label{th:lrna}
	The {\em LRNA} provides an optimal solution to optimization Problem \ref{prob:lower}.
\end{theorem}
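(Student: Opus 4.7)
The plan is to show that the set $\L^* \subseteq \H$ equal to the final value of $\L$ produced by the LRNA satisfies one of the two optimality cases of Theorem \ref{th:optcond}; since the vector returned in Step \ref{alg:lrna:step_return} is exactly $\z^{\L^*}$ from Definition \ref{def:Valloc}, optimality then follows immediately from Theorem \ref{th:optcond}. Termination of the algorithm is clear: $\H$ is finite and every non-terminating pass through Step \ref{alg:lrna:step_check} strictly enlarges $\L$, so Step \ref{alg:lrna:step_return} is reached after at most $\card{\H}$ iterations.

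The crux of the argument is a monotonicity lemma for the set function $s$: if $\L \cup \{k\} \subsetneq \H$ with $k \in \H \setminus \L$ and $\tfrac{A_k}{\sqrt{c_k}}\, s(\L) \leq m_k$, then $s(\L \cup \{k\}) \leq s(\L)$. Writing $s(\L) = N/D$ with $N = \Vt - \sum_{h \in \L} c_h m_h$ and $D = \sum_{h \in \H \setminus \L} A_h \sqrt{c_h}$, one has $s(\L \cup \{k\}) = (N - c_k m_k)/(D - A_k \sqrt{c_k})$, and cross-multiplication shows that the required inequality is equivalent to the hypothesis $N/D \leq \sqrt{c_k} m_k / A_k$. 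Iterating this lemma one element at a time (each newly shrunk $s$ still dominates $A_k s/\sqrt{c_k}$ for every remaining $k$), I obtain that whenever $\L$ is updated to $\L \cup \Lt$ in Step \ref{alg:lrna:step_check} with $\L \cup \Lt \subsetneq \H$, $s(\L \cup \Lt) \leq s(\L)$ and every $h \in \Lt$ continues to satisfy $\tfrac{A_h}{\sqrt{c_h}}\, s(\L \cup \Lt) \leq m_h$.

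I then split into the two cases of Theorem \ref{th:optcond}. If the algorithm terminates with $\L^* \subsetneq \H$, chaining the monotonicity across all iterations yields $\tfrac{A_h}{\sqrt{c_h}}\, s(\L^*) \leq m_h$ for every $h \in \L^*$, while the termination criterion $\Lt = \emptyset$ at $\L = \L^*$ gives the reverse strict inequality for $h \in \H \setminus \L^*$; together these establish \eqref{eq:optcond_1}, placing the algorithm in CASE I. If instead $\L^* = \H$, let $\L'$ denote the value of $\L$ just before the final enlargement to $\H$; every $h \in \H \setminus \L'$ was added, so $A_h \sqrt{c_h}\, s(\L') \leq c_h m_h$, and summing over these $h$ while using the definition of $s(\L')$ yields $\Vt \leq \sum_{h \in \H} c_h m_h$, which combined with the standing hypothesis $\Vt \geq \sum_{h \in \H} c_h m_h$ forces equality and puts us in CASE II. The main obstacle is the monotonicity lemma; once it is in hand, matching the algorithm's output to the optimality characterization of Theorem \ref{th:optcond} is routine bookkeeping.
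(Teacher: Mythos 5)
Your proposal is correct and follows essentially the same route as the paper: termination by strict growth of $\L$, a monotonicity property of $s$ under adding elements that satisfy the selection inequality, chaining it to get $\tfrac{A_h}{\sqrt{c_h}}\,s(\L^*)\leq m_h$ for all $h\in\L^*$, the termination test supplying the reverse inequality off $\L^*$, and the summation argument forcing $\Vt=\sum_{h\in\H}c_h m_h$ when $\L^*=\H$. The only cosmetic difference is that you prove the monotonicity of $s$ one element at a time and iterate, whereas the paper's Lemma \ref{lemma:s_mono} handles the whole batch $\Lt_r$ in a single application of the same cross-multiplication identity.
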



Before we prove Theorem \ref{th:lrna}, we first reveal certain monotonicity property of set function $s$, defined in 
\eqref{eq:s}, that will be essential to the proof of this theorem.

\begin{lemma}
	\label{lemma:s_mono} 
	Let $\A \subseteq \B \subsetneq \H$. Then
	\begin{equation}
		\label{eq:s_mono}
		s(\A) \geq s(\B)  \quad \Leftrightarrow \quad s(\A) \sum_{h \in \B \setminus \A}\, A_h \sqrt{c_h} \leq \sum_{h \in \B 
		\setminus \A}\, c_h m_h,
	\end{equation}
	where set function $s$ is defined in \eqref{eq:s}.
\end{lemma}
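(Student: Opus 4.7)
The plan is to set up shorthand for the numerator and denominator of $s$ and then reduce the equivalence in \eqref{eq:s_mono} to a one-line algebraic manipulation.

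First, I would introduce, for any $\L \subsetneq \H$, the quantities $N(\L) = \Vt - \sum_{h \in \L} c_h m_h$ and $D(\L) = \sum_{h \in \H \setminus \L} A_h \sqrt{c_h}$, so that $s(\L) = N(\L)/D(\L)$. I would then note that $D(\A), D(\B) > 0$ because $\A, \B \subsetneq \H$ and the $A_h, c_h$ are strictly positive. Since $\A \subseteq \B$, we have the disjoint decomposition $\H \setminus \A = (\H \setminus \B) \cup (\B \setminus \A)$, which immediately yields
\begin{equation*}
D(\A) - D(\B) = \sum_{h \in \B \setminus \A} A_h \sqrt{c_h}, \qquad N(\A) - N(\B) = \sum_{h \in \B \setminus \A} c_h m_h.
\end{equation*}

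Next, I would handle the trivial case $\A = \B$ separately: both sides of the equivalence \eqref{eq:s_mono} reduce to the tautology $0 \leq 0$, respectively $s(\A) = s(\B)$. For the remaining case $\A \subsetneq \B$, I would multiply the inequality $s(\A) \geq s(\B)$ through by the positive quantity $D(\A) D(\B)$ to obtain $N(\A) D(\B) \geq N(\B) D(\A)$, substitute $N(\B) = N(\A) - (N(\A) - N(\B))$ and $D(\B) = D(\A) - (D(\A) - D(\B))$, and cancel the common term $N(\A) D(\A)$. What remains is $(D(\A) - D(\B)) N(\A) \leq (N(\A) - N(\B)) D(\A)$, and dividing by $D(\A) > 0$ gives $s(\A) (D(\A) - D(\B)) \leq N(\A) - N(\B)$, which is precisely the right-hand side of \eqref{eq:s_mono} after substituting the expressions for $D(\A) - D(\B)$ and $N(\A) - N(\B)$ computed above. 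All steps are reversible since $D(\A), D(\B) > 0$.

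I do not anticipate a real obstacle here: the content is purely algebraic bookkeeping on two fractions with positive denominators, and the only things that need care are (i) recording the positivity of the denominators to justify cross-multiplication, and (ii) checking the degenerate case $\A = \B$ so that the equivalence is not stated vacuously.
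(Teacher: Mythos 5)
Your proof is correct and is essentially the paper's argument: the paper packages the same cross-multiplication as a generic identity $\tfrac{\alpha+\beta}{\gamma+\delta} \geq \tfrac{\alpha}{\gamma} \Leftrightarrow \tfrac{\alpha+\beta}{\gamma+\delta}\delta \leq \beta$ and instantiates it with exactly your $N(\B), N(\A)-N(\B), D(\B), D(\A)-D(\B)$, whereas you carry out the manipulation inline. The content and the key positivity observations are identical.
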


\begin{proof}
	Clearly, for any $\alpha \in \R,\, \beta \in \R, \delta \in \R$, $\gamma \in \R_+$, such that $\gamma + \delta > 0$, we have 
	\begin{equation}
		\label{eq:smono_gene}
		\tfrac{\alpha + \beta}{\gamma + \delta} \geq \tfrac{\alpha}{\gamma} \quad \Leftrightarrow \quad \tfrac{\alpha + 
		\beta}{\gamma + \delta} \delta \leq \beta.
	\end{equation}
	To prove \eqref{eq:s_mono}, take
	\begin{align*}
		\alpha	  &=  \Vt - \sum_{h \in \B} c_h m_h & \beta & = \sum_{h \in \B \setminus \A} c_h m_h \\
		\gamma &= \sum_{h \in \H \setminus \B} A_h \sqrt{c_h} & \delta &= \sum_{h \in \B \setminus \A} A_h \sqrt{c_h}.
	\end{align*}
	Then, $\tfrac{\alpha}{\gamma} = s(\B),\, \tfrac{\alpha + \beta}{\gamma + \delta} = s(\A)$, and hence \eqref{eq:s_mono} 
	holds as an immediate consequence of \eqref{eq:smono_gene}.
\end{proof}

We are now ready to give the proof of Theorem \ref{th:lrna}.

\begin{proof}[Proof of Theorem \ref{th:lrna}]
	Let $\L_r,\, \Lt_r$ denote sets $\L$ and $\Lt$ respectively, as in the $r$-th iteration of the {\em LRNA} algorithm, at the 
	moment after {\footnotesize Step \ref{alg:lrna:step_R}} and before {\footnotesize Step \ref{alg:lrna:step_check}}. The 
	iteration index $r$ takes on values from set $\{1, \ldots, r^*\}$, where $r^* \geq 1$ indicates the final iteration of the 
	algorithm. Under this notation, we have $\L_1 = \emptyset$ and in general for subsequent iterations, if any (i.e. if $r^* \geq 
	2$), we get
	\begin{equation}
		\label{eq:lrna_Vsum}
		\L_r = \L_{r-1} \cup \Lt_{r-1} = \bigcup\limits_{i = 1}^{r-1} \Lt_i, \ind{r = 2, \ldots, r^*.}
	\end{equation}
	
	To prove Theorem \ref{th:lrna}, we have to show that: 
	\begin{enumerate}[label=(\Roman*), nosep, labelsep=5pt]
		\item the algorithm terminates in a finite number of iterations, i.e. $r^* < \infty$, \label{rna_proof_I}
		\item the solution computed at $r^*$ is optimal. \label{rna_proof_II}
	\end{enumerate}
	
	The proof of \ref{rna_proof_I} is relatively straightforward. In every iteration $r = 2, \ldots, r^* \geq 2$, the domain of 
	discourse for $\Lt_r$ at {\footnotesize Step \ref{alg:lrna:step_R}} is $\H \setminus \L_r = \H \setminus \bigcup\limits_{i = 
	1}^{r-1} \Lt_i$, where $\Lt_i \neq \emptyset,\, i = 1, \ldots, r - 1$. Therefore, in view of {\footnotesize Step 
	\ref{alg:lrna:step_check}}, we have that $r^* \leq \card{\H} + 1 < \infty$, where $r^* = \card{\H} + 1$ if and only if 
	$\card{\Lt_r} = 1$ for each $r = 1, \ldots, r^* - 1$. In words, the algorithm terminates in at most $\card{\H} + 1$ iterations.
	
	In order to prove \ref{rna_proof_II}, following Theorem \ref{th:optcond}, it suffices to show that for $\L_{r^*} \subsetneq 
	\H$ (CASE I), for all $h \in \H$,
	\begin{equation}
		\label{eq:lrna_proof_optcond_1}
		h \in \L_{r^*} \quad \Leftrightarrow \quad \tfrac{A_h}{\sqrt{c_h}}\, s(\L_{r^*}) \leq m_h,
	\end{equation}
	and for $\L_{r^*} = \H$ (CASE II):
	\begin{equation}
		\label{eq:lrna_proof_optcond_2}
		\Vt = \sum_{h \in \H} c_h m_h.
	\end{equation}\\

	We first note that the construction of the algorithm ensures that $\L_r \subsetneq \H$ for $r = 1, \ldots, r^* - 1$, $r^* \geq 
	2$, and therefore $s(\L_r)$ for such $r$ is well-defined.\\
	\begin{enumerate}[wide, labelindent=0pt, leftmargin=*]
		\item[CASE I.] The $s(\L_{r^*})$ is well-defined since in this case $\L_{r^*} \subsetneq \H$.
		\begin{enumerate}[wide, labelindent=0pt, leftmargin=*, font=\itshape]
			\item[Necessity:] For $r^* = 1$, we have $\L_{r^*} = \emptyset$ and hence, the right-hand side of equivalence 
			\eqref{eq:lrna_proof_optcond_1} is trivially met. Let $r^* \geq 2$. By \footnotesize Step \ref{alg:lrna:step_R} 
			\normalsize of the {\em LRNA}, we have
			\begin{equation}
				\label{eq:lrna_proof_less_m}
				\tfrac{A_h}{\sqrt{c_h}}\, s(\L_r) \leq m_h, \ind{h \in \Lt_r,}
			\end{equation}
			for every $r = 1, \ldots, r^* - 1$. Multiplying inequalities \eqref{eq:lrna_proof_less_m} sidewise by $c_h$ and 
			summing over $h \in \Lt_r$, we get the right-hand side of equivalence \eqref{eq:s_mono} with $\A = \L_r$ and $\B = 
			\L_r \cup \Lt_r  = \L_{r+1} \subsetneq \H$. Then, by Lemma \ref{lemma:s_mono}, the first inequality in 
			\eqref{eq:s_mono} follows. Consequently,
			\begin{equation}
				\label{eq:lrna:proof:s_monotone}
				s(\L_1) \geq \ldots \geq s(\L_{r^*}).
			\end{equation}
			Now, assume that $h \in \L_{r^*} = \bigcup\limits_{r = 1}^{r^*-1} \Lt_r$. Thus, $h \in \Lt_r$ for some $r \in \{1, \ldots, 
			r^*-1\}$, and again, using \footnotesize Step \ref{alg:lrna:step_R} \normalsize of the {\em LRNA}, we get 
			$\tfrac{A_h}{\sqrt{c_h}}\, s(\L_r) \leq m_h$. Consequently, \eqref{eq:lrna:proof:s_monotone} yields 
			$\tfrac{A_h}{\sqrt{c_h}}\, s(\L_{r^*}) \leq m_h$.
			
			\item[Sufficiency:] The proof is by establishing a contradiction. Assume that $\tfrac{A_h}{\sqrt{c_h}}\, s(\L_{r^*}) \leq 
			m_h$ and $h \notin \L_{r^*}$. On the other hand, \footnotesize Step \ref{alg:lrna:step_check} \normalsize of the {\em 
			LRNA} yields $\tfrac{A_h}{\sqrt{c_h}}\, s(\L_{r^*}) > m_h$ for $h \in \H \setminus \L_{r^*}$ (i.e. $h \not \in \L_{r^*}$), 
			which contradicts the assumption.
		\end{enumerate}
		\newpage 
		\item[CASE II.] Note that in this case it must be that $r^* \geq 2$. Following \footnotesize Step \ref{alg:lrna:step_R} 
		\normalsize of the {\em LRNA}, the only possibility is that for all $h \in \H \setminus \L_{r^* - 1}$,
		\begin{equation}
			\label{eq:lrna:proof:case_2_r1}
			\tfrac{A_h}{\sqrt{c_h}}\, s(\L_{r^* - 1}) = \tfrac{A_h}{\sqrt{c_h}}\, \frac{\Vt - \sum_{i \in \L_{r^* - 1}} c_i m_i}{\sum_{i \in 
			\H \setminus \L_{r^* - 1}} A_i \sqrt{c_i}} \leq m_h.
		\end{equation}
		Multiplying both sides of inequality \eqref{eq:lrna:proof:case_2_r1} by $c_h$, summing it sidewise over $h \in \H 
		\setminus \L_{r^* - 1}$, we get $\Vt \leq \sum_{i \in \H} c_i m_i$, which, when combined with the requirement $\Vt \geq 
		\sum_{h \in \H} c_h m_h$, yields $\Vt = \sum_{h \in \H} c_h m_h$, i.e. \eqref{eq:lrna_proof_optcond_2}.
	\end{enumerate}
	
\end{proof}

Following Theorem \ref{th:lrna} and given \eqref{eq:redef_minsample_lower_sol}, the optimal solution to Problem 
\ref{prob:min_cost} is vector $\x^* = (x^*_h,\, h \in \H)$ with elements of the form:
\begin{equation}
	x^*_h = 
	\begin{cases}
		M_h, & \ind{h \in \L^*} \\
		\tfrac{A_h}{\sqrt{c_h}}\, \tfrac{\sum_{i \in \H \setminus \L^*} A_i \sqrt{c_i}}{V + A_0 - \sum_{i \in \L^*} 
		\tfrac{A_i^2}{M_i}}, & \ind{h \in \H \setminus \L^*,}
	\end{cases}
\end{equation}	
where $\L^* \subseteq \H$ is determined by the {\em LRNA}.

\section{Final remarks and conclusions}
\label{sec:rem}

Within this work we formulated the optimality conditions for an important problem of minimum cost allocation under 
constraints on stratified estimator's variance and maximum samples sizes in strata. This allocation problem was defined in 
this paper as Problem \ref{prob:min_cost} and converted to Problem \ref{prob:lower} through transformation 
\eqref{eq:optvar_change} and under \eqref{eq:optvar_change_params}.  Based on the established optimality conditions, we 
provided a formal and compact proof of the optimality of the {\em LRNA} algorithm that solves the allocation problem  
mentioned. As already outlined at the end of Section \ref{sec:motivation} of this paper, Problem \ref{prob:lower} can be 
viewed in two ways, each of which is of a great practical importance. That is, apart from its primary interpretation as the 
problem of minimizing the total cost under given constraints, it can also be perceived as the problem of minimizing the 
stratified estimator's variance under constraint on total sample size (i.e. Problem \ref{prob:optalloc}) and constraints imposed 
on minimum sample sizes in the strata. For this reason, all the results of this work established in relation to Problem 
\ref{prob:lower} (i.e. optimality conditions and the {\em LRNA}) are of such a twofold nature. 

\bigskip
For the reasons mentioned at the end of Section \ref{sec:motivation}, the {\em LRNA} can be considered as a counterparty to 
the {\em RNA}. This resemblance is particularly desirable, given the popularity, simplicity as well as relatively high 
computational efficiency of the latter algorithm. Among the alternative approaches that could potentially be adapted to solve 
Problem \ref{prob:lower} are the ideas that underlay the existing algorithms dedicated to Problem \ref{prob:upper}, i.e.: {\em 
SGA} \citep{WWW, SG} and {\em COMA} \citep{WWW}. For integer-valued algorithms dedicated to Problem \ref{prob:lower} 
with added upper-bounds constraints, see \cite{Friedrich, Wright2017, Wright2020}. Nevertheless, it should be noted that 
integer-valued algorithms are typically relatively slow compared to not-necessarily integer-valued algorithms. As pointed out 
in \citet{Friedrich}, computational efficiency of integer-valued allocation algorithms becomes an issue for cases with "many 
strata or when the optimal allocation has to be applied repeatedly, such as in iterative solutions of stratification problems". 

\bigskip
Finally, we would like to emphasize that the optimality conditions established in Theorem \ref{th:optcond} can be used as a 
baseline for the development of new algorithms that provide solution to the optimum allocation problem considered in this 
paper. For instance, such algorithms could be derived by exploiting the ideas embodied in {\em SGA} or {\em COMA}, 
dedicated to Problem \ref{prob:upper} as indicated above.

\bigskip
Theoretical results obtained in this paper are complemented by the \proglang{R}-implementation \citep{R} of the {\em LRNA}, 
which we include in our publicly available package {\tt stratallo} \citep{stratallo}.

\section*{Acknowledgements}

I am very grateful to Jacek Wesołowski from Warsaw University of Technology, my research supervisor, for his patient 
guidance on this research work. Many thanks to Robert Wieczorkowski from Statistics Poland for advice and explanations on 
the topic of optimum stratification. I would also like to thank Reviewers for taking the necessary time and effort to review the 
manuscript. In particular, I express my gratitude to the second of the Reviewers for his expertise, valuable suggestions and 
for pointing to the existing papers, particularly important from the point of view of the subject I am addressing in this work.

\everypar = {
	\parindent=0pt
	\hangindent=8mm
	\hangafter=1
}\noindent 

\bibliography{lrna_arxiv_20231208}

\newpage

\section*{APPENDICES}
\appendix

\section{Recursive Neyman allocation}
\label{app:rna}

The classical problem of optimum sample allocation is described e.g. in \citet[Section 3.7.3, p.~104]{Sarndal}. It can be 
formulated in the language of mathematical optimization as Problem \ref{prob:optalloc}.
\begin{problem}
	\label{prob:optalloc}
	Given a finite set $\H \neq \emptyset$ and numbers $A_h > 0,\, h \in \H,\, 0 < n \leq N$,
	\begin{align*}
		\underset{\x\, = \, (x_h,\, h \in \H)\, \in\, \R_+^{\card{\H}}}{\texteq{minimize ~\,}}  & \quad \sum_{h \in \H} 
		\tfrac{A_h^2}{x_h} \\
		\texteq{subject ~ to \quad\,\,\,}	& \quad \sum_{h \in \H} x_h = n.
	\end{align*}
\end{problem}

The solution to Problem \ref{prob:optalloc} is $\x^* = (x^*_h,\, h \in \H)$ with elements of the form:
\begin{equation*} 
	x_h^* = A_h\, \frac{n}{\sum_{i \in \H} A_i}, \ind{h \in \H.}
\end{equation*}

It was established by \citet{Tschuprow1923a, Tschuprow1923b} and \citet{Neyman} for {\em stratified $\pi$ estimator} of the 
population total with {\em simple random sampling without replacement} design in strata, \linebreak in the case of which $A_h 
= N_h S_h,\, h \in \H$, where $S_h$ denotes stratum standard deviation of  a given study variable. See, e.g. \citet[Section 
3.7.4.i., p.~106]{Sarndal} for more details.

The recursive Neyman allocation algorithm, denoted here as {\em RNA}, is a well-established allocation procedure that finds a 
solution to the classical optimum sample allocation Problem \ref{prob:optalloc} with added one-sided upper-bounds 
constraints, defined here as Problem \ref{prob:upper}.
\begin{problem}
	\label{prob:upper}
	Given a finite set $\H \neq \emptyset$ and numbers $A_h > 0,\, M_h > 0$, such that $M_h \leq N_h,\, h \in \H$, and $0 < n 
	\leq \sum_{h \in \H}\, M_h$, 
	\begin{align*}
		\underset{\x\, =\, (x_h,\, h \in \H)\, \in\, \R_+^{\card{\H}}}{\texteq{minimize ~\,}}  & \quad \sum_{h \in \H} \tfrac{A_h^2}{x_h} 
		\\
		\texteq{subject ~ to \quad\,\,\,}	& \quad \sum_{h \in \H} x_h = n \\
		& \quad x_h \le M_h,  \ind{h \in \H.}
	\end{align*}
\end{problem}
\begin{algorithm}[H]
		\caption{{\em RNA}}
		\vspace{0.3cm}\textbf{Input:} $\H,\, (A_h)_{h \in \H},\, (M_h)_{h \in \H},\, n$.
	\begin{algorithmic}[1]
		\Require $A_h > 0,\, M_h > 0,\, h \in \H$, $0 < n \leq \sum_{h \in \H}\, M_h$.
		\State Let $\U = \emptyset$.
		\State\label{alg:rna:step_first}Determine $\Ut = \left\{h \in \H \setminus \U:\, A_h \,  \frac{n - \sum_{i \in \U} M_i}{\sum_{i 
		\in \H \setminus \U} A_i} \ge M_h \right\}$.
		\State If {$\Ut = \emptyset$}, go to \footnotesize Step \ref{alg:rna:step_return} \normalsize. Otherwise, update $\U \gets 
		\U \cup \Ut$, and go to \footnotesize Step \ref{alg:rna:step_first} \normalsize.
		\State\label{alg:rna:step_return}Return $\x^* = (x^*_h,\, h \in \H)$ with
		$x_h^* = 
		\begin{cases}
			M_h,		& \ind{h \in \U} \\
			A_h\, \frac{n - \sum_{i \in \U} M_i}{\sum_{i \in \H \setminus \U} A_i}, & \ind{h \in \H \setminus \U.}
		\end{cases}
		$
	\end{algorithmic}
\end{algorithm}
For more information on this recursive procedure see \citet[Remark 12.7.1, p.~466]{Sarndal} and \citet{WWW} for the proof 
of its optimality.\\\\

\section{Convex optimization scheme and the KKT conditions}
\label{app:kkt}

A convex optimization problem is an optimization problem in which the objective function is a convex function and the 
feasible set is a convex set. In standard form it is written as
\begin{equation}
	\label{prob:convex}
	\begin{split}
		\underset{\x\, \in\, \D}{\texteq{minimize ~\,}} & \quad f(\x) \\
		\texteq{subject ~ to}  & \quad w_i(\x) = 0, \quad {i = 1, \ldots, k} \\
		& \quad g_j(\x) \le 0, \quad{j = 1, \ldots, \ell,}\\\\
	\end{split}
\end{equation}

where $\D \subseteq \R^p,\, p \in \N_+$, the objective function $f:\D_f \subseteq \R^p \to \R$ and inequality constraint 
functions $g_j: \D_{g_j} \subseteq \R^p \to \R,\, j = 1, \ldots, \ell$, are convex, whilst equality constraint functions $w_i: 
\D_{w_i} \subseteq \R^p \to \R,\, i = 1, \ldots, k$, are affine. Here, $\D = \D_f \cap \bigcap_{i=1}^{k} \D_{w_i} \cap 
\bigcap_{j=1}^{\ell} \D_{g_j}$ denotes a common domain of all the functions. Point $\x \in \D$ is called {\em feasible} if it 
satisfies all of the constraints, otherwise the point is called {\em infeasible}. An optimization problem is called {\em feasible} if 
there exists $\x \in \D$ that is {\em feasible}, otherwise the problem is called {\em infeasible}.

\bigskip
In the context of the optimum allocation Problem \ref{prob:lower} discussed in this paper, we are interested in a particular 
type of the convex problem, i.e. \eqref{prob:convex} in which all inequality constraint functions $g_j,\, j = 1, \ldots, \ell$, are 
affine. It is well known, see, e.g. the monograph \citet{Boyd}, that the solution for such an optimization problem can be 
identified through the set of equations and inequalities known as the Karush-Kuhn-Tucker (KKT) conditions, which in this 
case are not only necessary but also sufficient.

\begin{theorem}[KKT conditions for convex optimization problem with affine inequality constraints]
	\label{th:kkt}
	A point $\x^* \in \D \subseteq \R^p$ is a solution to the convex optimization problem \eqref{prob:convex} in which 
	functions $g_j,\, j = 1, \ldots, \ell$, are affine if and only if there exist numbers $\lambda_i \in \R$, $i = 1, \ldots, k$, and 
	$\mu_j \geq 0$, $j = 1, \ldots, \ell$, called KKT multipliers, such that
	\begin{equation}
		\label{eq:kkt}
		\begin{gathered}
			\nabla f(\x^*)+\sum_{i=1}^k \lambda_i \nabla w_i(\x^*) + \sum_{j=1}^\ell \mu_j \nabla g_j(\x^*) = \zero \\
			w_i(\x^*) = 0, \ind{i = 1, \ldots, k} \\
			g_j(\x^*) \le 0, \ind{j = 1, \ldots, \ell} \\
			\mu_j g_j(\x^*) = 0, \ind{j = 1, \ldots, \ell.}
		\end{gathered}
	\end{equation}
\end{theorem}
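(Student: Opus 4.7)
The plan is to prove the two directions of the equivalence separately. The sufficiency direction is a short algebraic argument that relies on convexity of the Lagrangian; the necessity direction requires a constraint qualification, which in the present setting is automatic because every $w_i$ and every $g_j$ is affine.

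For sufficiency, I would introduce the Lagrangian
\[ L(\x) = f(\x) + \sum_{i=1}^{k} \lambda_i w_i(\x) + \sum_{j=1}^{\ell} \mu_j g_j(\x) \]
and observe that, with $\mu_j \geq 0$ and $\lambda_i \in \R$, the map $\x \mapsto L(\x)$ is a sum of the convex function $f$, affine terms $\lambda_i w_i$, and nonnegative combinations $\mu_j g_j$ of convex functions, hence convex on $\D$. The stationarity equation in \eqref{eq:kkt} reads $\nabla L(\x^*) = \zero$, so $\x^*$ is a global minimizer of $L$ over $\D$ by the first-order condition for unconstrained convex minimization. For any feasible $\x$, primal feasibility $w_i(\x) = 0$ and $g_j(\x) \leq 0$ together with $\mu_j \geq 0$ imply $L(\x) \leq f(\x)$, while complementary slackness $\mu_j g_j(\x^*) = 0$ together with $w_i(\x^*) = 0$ gives $L(\x^*) = f(\x^*)$. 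Chaining $f(\x^*) = L(\x^*) \leq L(\x) \leq f(\x)$ establishes the optimality of $\x^*$.

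For necessity, assume $\x^*$ solves \eqref{prob:convex}, and let $J = \{j : g_j(\x^*) = 0\}$ denote the active index set. Because every constraint is affine, the tangent cone to the feasible set at $\x^*$ coincides with the linearized cone
\[ T = \{\mathbf{d} \in \R^p :\, \nabla w_i(\x^*) \cdot \mathbf{d} = 0 \text{ for every } i,\, \nabla g_j(\x^*) \cdot \mathbf{d} \leq 0 \text{ for every } j \in J\}; \]
this is the so-called linearity (or Abadie) constraint qualification. The first-order necessary condition for a local minimum then gives $\nabla f(\x^*) \cdot \mathbf{d} \geq 0$ for every $\mathbf{d} \in T$. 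Applying Farkas' lemma to this homogeneous system of linear (in)equalities produces nonnegative scalars $\mu_j \geq 0$ for $j \in J$ and real scalars $\lambda_i$ with $-\nabla f(\x^*) = \sum_{i=1}^{k} \lambda_i \nabla w_i(\x^*) + \sum_{j \in J} \mu_j \nabla g_j(\x^*)$. Setting $\mu_j = 0$ for $j \notin J$ recovers the stationarity equation and complementary slackness simultaneously; primal feasibility is just the assumption that $\x^*$ is feasible.

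The hard part is the necessity direction, specifically the justification that no Slater-type interior assumption is required once all constraints are affine, and the careful use of Farkas' lemma so that equality multipliers emerge as signed $\lambda_i$ and inequality multipliers as nonnegative $\mu_j$. A subsidiary subtlety is the common domain $\D$, which is convex but may fail to be open; a clean proof either assumes $\x^*$ lies in the relative interior of $\D$ or extends $f$ and the $g_j$ by $+\infty$ outside $\D$, which is painless in the setting of Problem \ref{prob:lower}, where the relevant $\z^*$ is interior to $\R_+^{\card{\H}}$. Since Theorem \ref{th:kkt} is entirely classical, the pragmatic plan for the appendix is to spell out the short sufficiency argument above and refer to \citet{Boyd} for the Farkas-lemma details underpinning necessity.
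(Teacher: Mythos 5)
The paper does not actually prove Theorem \ref{th:kkt}: it is stated as a classical result and justified only by the citation of \citet{Boyd}, so there is no in-paper argument to match yours against. Your proposal is a correct reconstruction of the standard proof and goes further than the paper does. The sufficiency half is complete: convexity of the Lagrangian $L$, stationarity $\nabla L(\x^*)=\zero$ forcing $\x^*$ to be a global minimizer of $L$, and the chain $f(\x^*)=L(\x^*)\le L(\x)\le f(\x)$ for feasible $\x$ is exactly the textbook argument and is airtight. The necessity half is a correct outline; your key observation --- that affineness of all $w_i$ and $g_j$ makes the tangent cone equal to the linearized cone, so no Slater-type condition is needed --- is the right constraint qualification, and the Farkas step does deliver signed $\lambda_i$ for equalities and nonnegative $\mu_j$ for active inequalities. (This is a genuinely different route from what \citet{Boyd} does, which reaches necessity through strong duality via the refined Slater condition for affine inequality constraints; the tangent-cone/Farkas route is more elementary and avoids duality machinery.) The one substantive caveat is the one you already flag: as literally stated the theorem can fail if $\D$ is not open and the minimizer lies on the boundary of $\D$ without being pinned there by the $w_i$ and $g_j$ (e.g.\ minimizing $x$ over $\D=[0,1]$ with no constraints), so the necessity direction needs $\x^*$ in the interior of $\D$ or an extended-value reformulation. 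Since the paper applies the theorem only to Problem \ref{prob:lower}, where $\D=\R_+^{\card{\H}}$ is open, this is harmless, but it would be worth stating the openness hypothesis explicitly if the proof is to be included.
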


\end{document}